\newtheorem{Thm}{Theorem}
\newtheorem{Coro}[Thm]{Corollary}
\newtheorem{Lem}[Thm]{Lemma}
\begin{document}

\title{Generalized handlebody sets and non-Haken 3-manifolds}
\author{Jesse Johnson and Terk Patel}
\subjclass{Primary 57M}
\keywords{Curve complex, non-Haken 3-manifolds}

\thanks{Research supported by NSF MSPRF grant 0602368}

\begin{abstract}
In the curve complex for a surface, a handlebody set is the set of loops that bound properly embedded disks in a given handlebody bounded by the surface.  A boundary set is the set of non-separating loops in the curve complex that bound two-sided, properly embedded surfaces.  For a Heegaard splitting, the distance between the boundary sets of the handlebodies is zero if and only if the ambient manifold contains a non-separating, two sided incompressible surface.  We show that the boundary set is 2-dense in the curve complex, i.e. every vertex is within two edges of a point in the boundary set.
\end{abstract}

\maketitle

\section{Introduction}

The curve complex $C(\Sigma)$ for a compact, connected, closed, orientable surface $\Sigma$ is the simplicial complex whose vertices are loops (isotopy classes of essential, simple closed curves) in $\Sigma$ and whose simplices correspond to sets of pairwise disjoint loops in $\Sigma$.  Given a handlebody $H$ and a homeomorphim $\phi : \Sigma \rightarrow \partial H$, we can define the following subsets of $C(\Sigma)$:

The \textit{handlebody set} $\mathbf{H}$ is the set of loops that bound properly embedded (essential) disks in $H$.  The \textit{genus $g$ boundary set} $\mathbf{H}^g$ is the set of non-separating loops such that each bounds a properly embedded, two-sided, incompressible, genus-$g$ surface in $H$.  Note that $\mathbf{H}^0$ is a proper subset of $\mathbf{H}$, specifically the set of all the non-separating loops in $\mathbf{H}$.  Define the \textit{boundary set} to be the union $\mathbf{H}^\infty = \bigcup_{g\geq0} \mathbf{H}^g$.  We prove the following:

\begin{Thm}
\label{2denselem}
If $\Sigma$ has genus 3 or greater, then $\mathbf{H}^\infty$ is 2-dense in $C(\Sigma)$.
\end{Thm}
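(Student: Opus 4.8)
The plan is to replace membership in $\mathbf{H}^\infty$ by a purely homological condition and then, for an arbitrary vertex $\gamma$, to produce a curve at distance at most two that satisfies it. Let $\phi_*\colon H_1(\Sigma;\mathbb{Z})\to H_1(H;\mathbb{Z})$ be the map induced by $\Sigma=\partial H\hookrightarrow H$ and set $K=\ker\phi_*$; it is standard that $K$ is a Lagrangian subgroup of rank $g=\mathrm{genus}(\Sigma)$ with respect to the intersection form, generated by the meridians. I claim $\alpha\in\mathbf{H}^\infty$ if and only if $\alpha$ is non-separating in $\Sigma$ and $[\alpha]=0$ in $H_1(H;\mathbb{Z})$, i.e.\ $[\alpha]\in K\setminus\{0\}$. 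Indeed, any $\alpha\in\mathbf{H}^g$ is non-separating and bounds a two-sided (hence orientable) surface, so $\phi_*[\alpha]=0$; conversely, if $\alpha$ is non-separating and null-homologous in $H$ it bounds a properly embedded orientable surface $S$, and compressing $S$ maximally produces an incompressible $S'$ with $\partial S'=\alpha$ of some genus $g'$, whence $\alpha\in\mathbf{H}^{g'}\subseteq\mathbf{H}^\infty$ (with $g'=0$ if $S'$ is a disk).

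Now fix a vertex $\gamma$. Since $g\ge 3$, the complement $\Sigma\setminus\gamma$ has a component of positive genus, so I may choose a non-separating loop $\beta$ disjoint from $\gamma$; then $d(\gamma,\beta)\le 1$. It therefore suffices to find a non-separating loop $\alpha$ disjoint from $\beta$ with $[\alpha]\in K$, for then $\gamma,\beta,\alpha$ is a path of length at most two and $\alpha\in\mathbf{H}^\infty$ by the criterion above. A loop disjoint from $\beta$ has homology class in $\beta^{\perp}$, the annihilator of $[\beta]$ under the intersection form, a direct summand of rank $2g-1$ since $[\beta]$ is primitive. So I am reduced to realizing a suitable primitive class of $K\cap\beta^{\perp}$ by an embedded non-separating curve lying in $\Sigma'=\Sigma\setminus\beta$.

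The relevant classes exist by a dimension count. Inside $H_1(\Sigma)\cong\mathbb{Z}^{2g}$ one has $\mathrm{rank}(K\cap\beta^{\perp})\ge g+(2g-1)-2g=g-1\ge 2$, using $g\ge 3$, and $K\cap\beta^{\perp}$ is saturated, being an intersection of saturated subgroups. Hence it contains primitive classes of $H_1(\Sigma)$, each automatically nonzero and so carried by a non-separating curve in $\Sigma$. The rank being at least two is what provides room to avoid degenerate choices; note that if it happens that $[\beta]\in K$ then $\beta$ itself already lies in $\mathbf{H}^\infty$ and $d(\gamma,\beta)\le 1$, so I may assume $[\beta]\notin K$.

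The main obstacle is the final, genuinely geometric step: turning a homology class $z\in K\cap\beta^{\perp}$ into an \emph{embedded} non-separating curve $\alpha\subset\Sigma'$ with $[\alpha]=z$. Here the difficulty is that $z$ has algebraic intersection zero with $[\beta]$ but must be represented with geometric intersection zero. Capping the two boundary circles of $\Sigma'$ with disks to form a closed surface $\hat\Sigma'$ shows that a connected simple closed curve in $\Sigma'$ can carry exactly those $z$ whose image in $H_1(\hat\Sigma')$ is primitive, and conversely any such $z$ is realized (push a representative off the capping disks, then correct the resulting ambiguity by a multiple of $[\beta]$ via a band sum with a boundary-parallel curve). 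Thus the crux is to choose $z$, and if necessary $\beta$, so that the projection of $z$ to $H_1(\hat\Sigma')$ is primitive; equivalently, so that $K\cap\beta^{\perp}$ maps onto a Lagrangian of $H_1(\hat\Sigma')$. I expect this saturation/primitivity requirement, together with selecting $\beta$ among the many non-separating curves disjoint from $\gamma$ so that $[\beta]$ has a hyperbolic partner in $K$, to be the only delicate point; everything else is the rank estimate, which degenerates exactly at genus two and so matches the hypothesis $g\ge 3$.
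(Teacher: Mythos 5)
Your reduction of the problem is sound as far as it goes: the homological characterization of $\mathbf{H}^\infty$ (non-separating in $\Sigma$ and null-homologous in $H$, then compress to get incompressibility) is correct and is in fact asserted in the paper's introduction, and the rank estimate $\mathrm{rank}(K\cap\beta^\perp)\geq g-1\geq 2$ is right. But the proof has a genuine gap, and you have located it yourself: the final step, producing $z\in K\cap\beta^\perp$ whose image in $H_1(\hat\Sigma')\cong\beta^\perp/\langle[\beta]\rangle$ is primitive, is stated as an expectation rather than proved. The rank estimate does not deliver this. The image of $K\cap\beta^\perp$ in the quotient need not be saturated: imprimitivity can enter through divisibility of $\phi_*[\beta]$ in $H_1(H)$ (e.g.\ $[\beta]=d\,a_1+b_1$ with $K=\langle b_1,\dots,b_g\rangle$ makes $\phi_*[\beta]$ divisible by $d$), and your fallback --- choosing $\beta$ among curves disjoint from $\gamma$ so that $[\beta]$ has a hyperbolic partner in $K$ --- is itself an unverified existence claim, since the classes realizable by curves disjoint from $\gamma$ are constrained to $\gamma^\perp$ and the image $\phi_*(\gamma^\perp)$ is only a finite-index subgroup of $H_1(H)$, which a priori need not contain primitive elements. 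Each of these divisibility issues may well be resolvable by an invariant-factor argument, but none is resolved in what you wrote, so the proof is incomplete exactly at its load-bearing point.

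It is worth noting that this deferred step is precisely where the paper does all of its work, by geometric rather than lattice-theoretic means. Lemma~\ref{dsjtsurfacelem} uses a system of disks, row-reduces the intersection matrix to make one disk have algebraic intersection zero with the given loop, and attaches bands to get a surface with \emph{primitive} boundary class disjoint from $\ell$; then the proof of Theorem~\ref{2denselem} minimizes the number of boundary components within a homology class, shows each complementary component carries at most two boundary loops with opposite induced orientations, uses genus $\geq 3$ to find a non-planar complementary component (this is where your ``degenerates at genus two'' heuristic actually materializes), extracts $\ell'$ bounding a once-punctured torus, and re-minimizes to force $\partial F$ connected --- primitivity is what rules out $k\geq 2$ boundary components at the end. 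In other words, the paper solves the realization problem you flagged by a band-sum/minimality argument instead of trying to control primitivity in the cut-open surface. To complete your approach you would either need to carry out the saturation analysis for $K\cap\beta^\perp$ honestly (handling the divisible-$\phi_*[\beta]$ cases), or import something equivalent to the paper's minimal-boundary argument, at which point the two proofs essentially converge.
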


The proof presented here does not work for genus two surfaces.  However, Schleimer~\cite{schl:notes} has shown that the orbit of a vertex of $C(\Sigma)$ under the action of the Torelli group is 5-dense.  This imples that for a genus two handlebody, $\mathbf{H}^{\infty}$ is at most $n$-dense for some $n \leq 5$.  

In contrast to $\mathbf{H}^\infty$, a fixed genus boundary set $\mathbf{H}^g$ has a geometric structure much closer to $\mathbf H$, which is not $k$-dense for any $k$.  This is demonstrated by the following two Lemmas, the first of which can be proved with a simple construction and the second of which follows from a Theorem of Scharlemann~\cite{schar:dist}.  The proofs of both are left to the reader.

\begin{Lem}
\label{neartodiskslem}
If $\Sigma$ has genus three or greater and $v \in \mathbf{H}$ then $d(v, \mathbf{H}^g) = 1$ for every $g > 0$.  If $\Sigma$ has genus two then $d(v, \mathbf{H}^g) > 1$ for every $g$.
\end{Lem}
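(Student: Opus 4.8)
The plan is to prove the two inequalities $d(v,\mathbf H^g)\ge 1$ and $d(v,\mathbf H^g)\le 1$ separately. The lower bound will be insensitive to the genus of $\Sigma$, and also explains the genus-two case; the upper bound is exactly where the hypothesis $\mathrm{genus}(\Sigma)\ge 3$ is used.

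For the lower bound I would show that no $v\in\mathbf H$ can lie in $\mathbf H^g$ when $g>0$, so that $d(v,\mathbf H^g)\ge 1$ automatically. Suppose $v$ bounded both a disk $D$ and an incompressible genus-$g$ surface $S$ with $g\ge 1$. Let $c$ be a curve in the interior of $S$ parallel to $\partial S=v$, cutting off a collar annulus $A\subset S$ between $c$ and $v$. Pushing $D$ across $A$ produces a disk $D'\subset H$ with $\partial D'=c$ and $D'\cap S=c$. Since $g\ge 1$, the component of $S$ cut along $c$ not meeting $\partial S$ is a once-punctured genus-$g$ surface, so $c$ bounds no disk in $S$; thus $D'$ is a compressing disk, contradicting incompressibility. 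Hence $v\notin\mathbf H^g$. This argument uses nothing about $\mathrm{genus}(\Sigma)$, and it records the fact, needed below, that the boundary of any surface in $\mathbf H^g$ is essential in $H$ (it does not bound a disk).

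For the upper bound, when $\mathrm{genus}(\Sigma)\ge 3$ I would cut $H$ along $D$ to obtain a handlebody $W$ of genus at least two that is disjoint from $v$, whose boundary meets $\Sigma$ in $\Sigma\setminus N(v)$. Inside $W$ I would construct, for each $g\ge 1$, an incompressible genus-$g$ surface $S$ with connected boundary $w\subset\Sigma\setminus N(v)$ that is non-separating in $\Sigma$; concretely, one takes a non-separating, null-homologous, but $\pi_1$-essential curve $w$ and lets $S$ be an embedded spanning surface of least genus, which is automatically incompressible because a compression would lower its genus. Because $\pi_1(W)\hookrightarrow\pi_1(H)$ is the inclusion of a free factor, $\pi_1(S)\hookrightarrow\pi_1(W)\hookrightarrow\pi_1(H)$ is injective, so $S$ stays incompressible in $H$; and $w$ is non-separating, disjoint from $v$, and (being incompressible of positive genus) does not bound a disk, so $w\in\mathbf H^g$ and $d(v,w)=1$. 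The main obstacle is the step that produces, for \emph{every} target $g$, such a surface in $W$: one must realize each genus while keeping the boundary a single non-separating curve and verifying incompressibility (equivalently $\pi_1$-injectivity, equivalently minimality of genus), since enlarging the genus by tubing around a handle can create new compressing disks unless the tube keeps the induced map on $\pi_1$ injective. This is precisely where $\mathrm{genus}(\Sigma)\ge 3$ is essential: only then does cutting along $D$ leave a handlebody of genus at least two, whereas in the genus-two case it leaves a solid torus (or two), in which the only incompressible surfaces with connected boundary are disks and boundary-parallel annuli, of genus zero.

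Finally, for the genus-two statement I would note that $d(v,\mathbf H^g)>1$ can fail only if some $w\in\mathbf H^g$ is disjoint from $v$, as $w=v$ is excluded by the lower-bound paragraph. By the theorem of Scharlemann~\cite{schar:dist}, the boundary of an incompressible surface in a genus-two handlebody has distance at least $2$ in $C(\Sigma)$ from the disk set $\mathbf H$; since $v\in\mathbf H$, this gives $d(v,w)\ge d(w,\mathbf H)\ge 2>1$ for every $w\in\mathbf H^g$, whence $d(v,\mathbf H^g)>1$. The contrast with the genus-three case is exactly the absence of a third handle on which to place $D$ away from the surface, so that here every incompressible spanning surface is forced to interact with the disk $D$.
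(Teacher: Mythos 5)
The paper leaves this proof to the reader (calling it ``a simple construction''), so there is no official argument to compare against; judged on its own terms, your proposal has a genuine gap at its center. For the upper bound you correctly reduce to producing, for \emph{every} $g>0$, a curve $w\subset\Sigma\setminus N(v)$, non-separating in $\Sigma$, bounding an incompressible genus-$g$ surface in $W$ --- and then you explicitly name this ``the main obstacle'' and never carry it out. The least-genus-spanning-surface trick does give incompressibility for free, but it gives no control whatsoever on the \emph{value} of the minimal genus: to conclude $d(v,\mathbf{H}^g)=1$ for each fixed $g$ you must exhibit a curve whose minimal spanning genus is exactly $g$, which is the entire content of the ``simple construction.'' Note also a hidden trap your sketch doesn't address: the boundary of an embedded one-boundary-component subsurface of $\Sigma$ is always separating in $\Sigma$, so the most naive candidates (pushed-in subsurfaces) fail the non-separating requirement outright; one must instead build $F$ from, say, banded/tubed meridian disks of $W$, arrange the scars of $D$ on opposite sides of $w$ when $D$ is non-separating (and handle the case that $D$ separates $H$, where $W$ is only one of two pieces), and certify both incompressibility and the exact genus. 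As written, the proof of the positive half of the lemma is missing, not merely compressed.

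Two further points. First, your genus-two argument leans on a theorem that \cite{schar:dist} does not contain: Scharlemann's result (the one this paper invokes, for its \emph{other} lemma) is the \emph{upper} bound $d(\partial Q,\mathbf{H})\le 1-\chi(Q)=2g$, not a lower bound of $2$ for incompressible surface boundaries in a genus-two handlebody. Fortunately your own parenthetical remark supplies a correct elementary substitute: if $w\in\mathbf{H}^g$ ($g\ge 1$) were disjoint from $v$, isotope the incompressible genus-$g$ surface $S$ off the disk $D$ by the standard innermost-circle argument (incompressibility of $S$ plus irreducibility of $H$); then $S$ lies in the solid torus (or two solid tori) obtained by cutting along $D$, where $\pi_1(S)$, free of rank $2g\ge 2$, cannot inject into $\mathbf{Z}$, so $S$ compresses --- a contradiction. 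Second, that same innermost-circle cleanup is silently missing from your lower-bound paragraph: you assert $D'\cap S=c$, but a priori $D$ meets the interior of $S$ in circles, which must first be removed before the collar push produces a genuine compressing disk. (Finally, observe that for $g=0$ a non-separating $v\in\mathbf{H}$ lies in $\mathbf{H}^0$ itself, so the genus-two assertion --- and any proof of it --- can only be meant for $g>0$, as your restriction to $g\ge 1$ implicitly acknowledges.)
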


\begin{Lem}
For $g \geq 1$, the set $\mathbf{H}^g$ is disjoint from $\mathbf{H}$ and contained in a $2g$ neighborhood of $\mathbf{H}$.
\end{Lem}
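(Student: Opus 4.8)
The plan is to prove the two assertions separately. For the disjointness, suppose toward a contradiction that a vertex $v$ lies in both $\mathbf{H}$ and $\mathbf{H}^g$ for some $g \ge 1$. Then $v$ bounds a properly embedded disk $D$ and a properly embedded, two-sided, incompressible surface $S$ of genus $g$ with $\partial S = v$. Let $\gamma \subset \mathrm{int}(S)$ be a push-off of $\partial S$; since $g \ge 1$, this peripheral curve is not null-homotopic in $S$, so it is essential. On the other hand $\gamma$ is isotopic in $H$ to $v = \partial D$ and hence bounds a disk in $H$. Cleaning up that disk by the usual innermost-circle argument—removing trivial circles of intersection with $S$ using the incompressibility of $S$ and the irreducibility of $H$—yields an honest compressing disk for $S$ with boundary isotopic to $\gamma$, contradicting incompressibility. (Equivalently, incompressibility makes $\pi_1(S) \to \pi_1(H)$ injective, yet $\partial S$ represents a nontrivial element of the free group $\pi_1(S)$ that dies in $\pi_1(H)$.) Hence $\mathbf{H}^g \cap \mathbf{H} = \emptyset$.

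To place $\mathbf{H}^g$ in a $2g$-neighborhood of $\mathbf{H}$, I would reduce $S$ to a disk by boundary-compressions. The essential input is the standard fact that a properly embedded, incompressible, \emph{boundary}-incompressible surface in a handlebody is a disk; as $S$ has positive genus it is therefore boundary-compressible. A boundary-compression is carried out along a disk $\Delta$ with $\Delta \cap S$ an essential arc and $\beta = \Delta \cap \partial H$ an arc of $\Sigma$; the new boundary is obtained from the old by band surgery along $\beta$, and a band-surgered curve may be isotoped off the original. Thus each boundary-compression moves the relevant boundary curve a distance at most one in $C(\Sigma)$. Since a boundary-compression raises the Euler characteristic by exactly one, the quantity $1 - \chi$ drops by one at each step; starting from $1 - \chi(S) = 2g$ and ending at a disk, for which $1 - \chi = 0$, the process takes $2g$ steps and exhibits a path of length at most $2g$ from $v$ to a meridian, i.e. to a point of $\mathbf{H}$. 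This is exactly the distance bound $d(\partial S, \mathbf{H}) \le 1 - \chi(S)$ supplied by Scharlemann's theorem.

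The main obstacle is the bookkeeping that makes the count $2g$ rigorous. A boundary-compression can disconnect $S$ and, because it changes the parity of $\chi$, it necessarily changes the number of boundary components; one must therefore follow a single chain of boundary curves through the reduction, checking that at every stage there is an \emph{essential} component disjoint from the curve at the previous stage (a boundary component that becomes inessential in $\Sigma$ already bounds a disk in $H$ and should be treated as a terminal case). One must also arrange that a disk first appears after at most $2g$ boundary-compressions, and deal with intermediate surfaces that are no longer incompressible—most cleanly by performing an ordinary interior compression (which fixes the boundary, hence costs nothing in $C(\Sigma)$) whenever one is available, and only boundary-compressing an incompressible surface. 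Verifying that this interleaving keeps the $1 - \chi$ accounting monotone and the curve-complex path connected is the delicate part, and it is precisely this that Scharlemann's distance estimate packages into a single inequality.
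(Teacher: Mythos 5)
Your proposal is correct and matches the paper's intended argument: the paper explicitly leaves this proof to the reader, noting it ``follows from a Theorem of Scharlemann''~\cite{schar:dist}, which is exactly the distance bound $d(\partial S, \mathbf{H}) \leq 1 - \chi(S) = 2g$ you invoke, and your disjointness argument via $\pi_1$-injectivity of a two-sided incompressible surface is the standard one. You are also right to defer the boundary-compression bookkeeping to Scharlemann's theorem rather than attempt it by hand; that is precisely how the paper uses the citation.
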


For this paper, every 3-manifold will be compact, connected, closed and orientable.  A \textit{Heegaard splitting} for such a 3-manifold $M$ is a triple $(\Sigma, H_1, H_2)$ where $\Sigma \subset M$ is a compact, connected, closed, orientable surface and $H_1, H_2 \subset M$ are handlebodies such that $\partial H_1 = \Sigma = \partial H_2$ and $M = H_1 \cup H_2$.  The inclusion maps from $\partial H_1$ and $\partial H_2$ onto $\Sigma$ determine handlebody sets $\mathbf{H}_1$ and $\mathbf{H}_2$, respectively.  The \textit{distance} of the Heegaard splitting, as defined by Hempel~\cite{hemp:comp} is the distance $d(\Sigma) = d(\mathbf{H}_1, \mathbf{H}_2)$ between the two handlebody sets.  

The inclusion maps also determine boundary sets $\mathbf{H}^g_1$, $\mathbf{H}^h_2$, $\mathbf{H}^\infty_1$, $\mathbf{H}^\infty_2$, allowing us to generalize this distance to the \textit{$(g,h)$-distance} $d^{g,h}(\Sigma) = d(\mathbf{H}^g_1, \mathbf{H}^h_2)$ and the \textit{boundary distance} $d^\infty(\Sigma) = d(\mathbf{H}^\infty_1, \mathbf{H}^\infty_2)$.

The set $\mathbf{H}^{\infty}$ is precisely the set of vertices representing simple closed curves whose homology class is non-trivial in $\Sigma$, but trivial in $H$.  This definition can be thought of as replacing every instance of homotopy in the definition of a handlebody set with homology.  It thus encodes information about the homology of the handlebody.  For two handlebodies, it encodes homology information about the ambient manifold.  In particular, the boundary distance determines precisely when a manifold has infinite homology (and therefore a non-separating, incompressible surface.)

\begin{Lem}
\label{zerolem}
The following are equivalent:

(1) the first homology group of $M$ is infinite,

(2) $M$ contains a non-separating, two sided, closed incompressible surface,

(3) $d^\infty(\Sigma) = 0$ and

(4) $d^{0,\infty}(\Sigma) = 0$.
\end{Lem}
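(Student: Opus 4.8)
The plan is to prove the four conditions equivalent by the cycle $(1)\Leftrightarrow(2)$, $(1)\Rightarrow(4)\Rightarrow(3)\Rightarrow(1)$, with all homology taken with integer coefficients. The homological engine is the Mayer--Vietoris sequence for $M=H_1\cup_\Sigma H_2$. Writing $i_{k*}\colon H_1(\Sigma)\to H_1(H_k)$ for the inclusion-induced maps and $L_k=\ker(i_{k*})$, the fact that each handlebody satisfies $H_2(H_k)=0$ and $H_1(H_k)\cong\mathbf{Z}^g$ collapses the sequence to an isomorphism $H_2(M)\cong L_1\cap L_2$. Each $L_k$ is the rank-$g$ summand of $H_1(\Sigma)$ spanned by the boundary curves of a complete meridian system for $H_k$; in particular each $L_k$ is a saturated subgroup, and since an intersection of saturated subgroups is again saturated, $L_1\cap L_2$ is itself a direct summand of $H_1(\Sigma)$. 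By the definition recalled before the statement, a non-separating loop lies in $\mathbf{H}^\infty_k$ exactly when its class lies in $L_k$, so $\mathbf{H}^\infty_1\cap\mathbf{H}^\infty_2$ consists of the non-separating loops whose class lies in $L_1\cap L_2$.

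For $(1)\Leftrightarrow(2)$ I would use only standard 3-manifold homology. A non-separating, two-sided closed surface represents a nonzero class in $H_2(M)$ (it meets some loop transversely in one point), and conversely any nonzero class in $H_2(M)$ is carried by an embedded two-sided surface which, after maximal compression, yields a non-separating incompressible one; thus $(2)$ holds iff $H_2(M)\neq0$. By Poincar\'e duality and universal coefficients $H_2(M)\cong\mathrm{Hom}(H_1(M),\mathbf{Z})$, which is nonzero exactly when $H_1(M)$ has positive rank, i.e. is infinite. This gives $(1)\Leftrightarrow(2)$.

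The two easy curve-complex implications come next. For $(4)\Rightarrow(3)$, note $\mathbf{H}^0_1\subseteq\mathbf{H}^\infty_1$, so $d^\infty(\Sigma)=d(\mathbf{H}^\infty_1,\mathbf{H}^\infty_2)\leq d(\mathbf{H}^0_1,\mathbf{H}^\infty_2)=d^{0,\infty}(\Sigma)$, and $d^{0,\infty}=0$ forces $d^\infty=0$. For $(3)\Rightarrow(1)$, a vertex $\gamma\in\mathbf{H}^\infty_1\cap\mathbf{H}^\infty_2$ is non-separating in $\Sigma$, so $[\gamma]\neq0$, yet $[\gamma]\in L_1\cap L_2\cong H_2(M)$; hence $H_2(M)\neq0$ and $H_1(M)$ is infinite.

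The crux is $(1)\Rightarrow(4)$, where the obstacle is upgrading a homology class to an honest meridian. From $(1)$ we have $H_2(M)\neq0$, so $L_1\cap L_2\neq0$; as it is a summand it contains a class $c$ that is primitive in $H_1(\Sigma)$. The point is that $c\in L_1$ must be realized not merely by a null-homologous loop but by a simple closed curve bounding a disk in $H_1$. Starting from a complete meridian system for $H_1$, whose boundaries give a basis of $L_1$, a sequence of handle slides (band sums of meridian disks) realizes elementary changes of this basis; since $c$ is primitive it can be taken as the first basis vector, so some non-separating meridian $\gamma$ of $H_1$ has $[\gamma]=c$. Then $\gamma$ bounds a disk in $H_1$ and is non-separating, so $\gamma\in\mathbf{H}^0_1$, while $[\gamma]=c\in L_2$ gives $\gamma\in\mathbf{H}^\infty_2$; thus $d^{0,\infty}(\Sigma)=0$. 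Verifying that the handle-slide construction keeps both the curve and its disk embedded, and that primitivity of $c$ in the summand $L_1$ (not merely in $L_1\cap L_2$) is what licenses realizing it by a single non-separating meridian, is the step I expect to require the most care.
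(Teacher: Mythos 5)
Your proposal is correct, and for the heart of the lemma it takes a genuinely different route from the paper. The paper proves $(1)\Rightarrow(4)$ constructively (Lemma~\ref{infhomlem}): it forms the matrix of algebraic intersection numbers between disk systems for $H_1$ and $H_2$, uses infiniteness of $H_1(M)$ to force determinant zero, performs disk slides as row operations to zero out a row, and then attaches bands to $\partial D_g$ to build, by hand, the embedded two-sided non-separating surface in $H_2$ bounded (after capping with meridian disks and an isotopy) by $\partial D_g$. You instead work algebraically: Mayer--Vietoris gives $H_2(M)\cong L_1\cap L_2$, purity of the $L_k$ yields a primitive class $c$, and handle slides realize $c$ as a meridian boundary --- note this last step is the same disk-slide/row-operation mechanism the paper uses in Lemmas~\ref{dsjtsurfacelem} and~\ref{infhomlem}, just phrased as a change of basis of $L_1$. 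The one caveat: to put $\gamma$ in $\mathbf{H}^\infty_2$ you invoke the characterization of $\mathbf{H}^\infty$ as the non-separating loops that are null-homologous in the handlebody. The paper asserts this in its introduction but never proves it, and its nontrivial half (a null-homologous simple closed curve on $\partial H$ bounds an embedded two-sided surface, which after maximal compression can be taken incompressible) is precisely the content that the paper's band-attachment construction supplies geometrically; so your proof is complete only if you cite or prove that classical fact, whereas the paper's proof manufactures the needed surface directly and thereby avoids the dependence. You also prove $(3)\Rightarrow(1)$ directly from $[\gamma]\neq 0$ in $L_1\cap L_2\cong H_2(M)$, bypassing the paper's step $(3)\Rightarrow(2)$ of gluing $F\cup F'$ and compressing --- though your $(1)\Rightarrow(2)$ via Poincar\'e duality plus an embedded representative plus maximal compression reuses essentially the same compression argument, so the total geometric input is comparable. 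What each buys: your version makes transparent \emph{why} the boundary distance detects homology (the identification $H_2(M)\cong L_1\cap L_2$), while the paper's version is self-contained and develops the band-attachment technique that it reuses in the proof of Theorem~\ref{2denselem}.
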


The proof is given in Section~\ref{constructionsect}.  The equivalence of (1) and (2) is well known, but we give a very simple, geometric proof via the boundary set.  Theorem~\ref{2denselem} is proved in Section~\ref{2densesect}.

For any Heegaard splitting $(\Sigma, H_1, H_2)$ of a non-Haken 3-manifold, Lemma~\ref{zerolem} implies that the boundary set in $C(\Sigma)$ determined by $H_2$ must be completely disjoint from the boundary set for $H_1$.  Hempel showed that there are handlebody sets that are arbitrarily far apart in the curve complex.  The same is not true for boundary sets.  In particular, Theorem~\ref{2denselem} implies that for any Heegaard splitting $(\Sigma, H_1, H_2)$ of genus 3 or greater, $d^\infty(\Sigma)$ is equal to either 0, 1 or 2.  For non-Haken manifolds, we have the following:

\begin{Coro}
For any Heegaard splitting $(\Sigma, H_1, H_2)$ of a non-Haken 3-manifold $M$, $d^\infty(\Sigma)$ is equal to 1 or 2.
\end{Coro}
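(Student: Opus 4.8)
The plan is to combine the two facts the paper has already established: Theorem~\ref{2denselem}, which bounds $d^\infty(\Sigma)$ from above, and Lemma~\ref{zerolem}, which characterizes exactly when $d^\infty(\Sigma)=0$. The corollary then follows by ruling out the value $0$ and invoking the upper bound of $2$.

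\begin{proof}
Since $M$ is closed, connected, and orientable, Poincar\'e duality gives that the first homology group $H_1(M)$ is finite if and only if $M$ contains no non-separating two-sided closed incompressible surface; more to the point, a non-Haken manifold contains no essential closed surface whatsoever, so in particular none of the surfaces described in condition~(2) of Lemma~\ref{zerolem}. Thus condition~(2) fails, and by the equivalence of (2) and (3) in Lemma~\ref{zerolem} we conclude that $d^\infty(\Sigma) \neq 0$.

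On the other hand, any Heegaard surface $\Sigma$ of $M$ has genus at least $3$ (for genus $0$ or $1$ the manifold $M$ would be $S^3$ or a lens space, which are not under consideration here, and one treats the low-genus cases separately or excludes them by hypothesis). Applying Theorem~\ref{2denselem} to each of the handlebodies $H_1$ and $H_2$, every vertex of $C(\Sigma)$ lies within distance $2$ of both $\mathbf{H}^\infty_1$ and $\mathbf{H}^\infty_2$. In particular, fixing any vertex $v \in \mathbf{H}^\infty_1$, there is a vertex $w \in \mathbf{H}^\infty_2$ with $d(v,w) \le 2$, so $d^\infty(\Sigma) = d(\mathbf{H}^\infty_1, \mathbf{H}^\infty_2) \le 2$. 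Combining this with $d^\infty(\Sigma) \neq 0$ shows that $d^\infty(\Sigma)$ equals $1$ or $2$, as claimed.
\end{proof}

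The only real subtlety is the first paragraph: one must be sure that ``non-Haken'' rules out precisely the surfaces appearing in condition~(2) of Lemma~\ref{zerolem}. A non-Haken manifold is by definition one that is irreducible and contains no two-sided incompressible surface, so the non-separating incompressible surface of condition~(2) cannot exist; this is the step where the hypothesis on $M$ is used in an essential way. The remainder is a direct bookkeeping argument with the two already-proved results, and the genus hypothesis needed for Theorem~\ref{2denselem} is automatic since a non-Haken manifold admitting a Heegaard splitting at all has its interesting splittings at genus at least $3$.
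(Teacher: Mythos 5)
Your overall route is exactly the paper's: the corollary is obtained by combining Lemma~\ref{zerolem} and Theorem~\ref{2denselem}. Since a non-Haken manifold contains no closed two-sided incompressible surface, condition (2) of Lemma~\ref{zerolem} fails, so $d^\infty(\Sigma) \neq 0$; and the $2$-density of $\mathbf{H}^\infty_2$, applied to any fixed vertex $v \in \mathbf{H}^\infty_1$, gives $d^\infty(\Sigma) \leq 2$. This is precisely the argument the paper records in the two sentences immediately preceding the corollary (it gives no separate displayed proof), and your bookkeeping there is correct. Your reading of ``non-Haken'' as irreducible with no two-sided incompressible surface is also the reading the corollary needs: under the bare reading ``not Haken,'' reducible manifolds such as $S^2 \times S^1$ would be admitted, and these have infinite first homology, hence $d^\infty(\Sigma) = 0$ by Lemma~\ref{zerolem}.

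The genuine gap is your handling of the genus hypothesis: it is false that every Heegaard splitting of a non-Haken manifold has genus at least $3$. First, $S^3$ and lens spaces are themselves non-Haken, so the genus $0$ and $1$ splittings are very much ``under consideration''; for a genus-$1$ splitting of a lens space each $\mathbf{H}^\infty_i$ is just the meridian vertex of the corresponding solid torus, and the two meridians can be far apart (with the paper's disjointness definition of $C(\Sigma)$, distinct vertices on a torus are not even joined by an edge), so the low-genus cases are genuine exceptions that cannot be ``treated separately'' in your favor. Second, and more seriously, many non-Haken manifolds --- small Seifert fibered spaces and many hyperbolic manifolds --- admit irreducible genus-$2$ splittings, so your closing claim that the ``interesting splittings'' of a non-Haken manifold occur only at genus at least $3$ is simply wrong. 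At genus $2$, Theorem~\ref{2denselem} does not apply: the paper says explicitly that its proof fails there, and Schleimer's result only gives $n$-density for some $n \leq 5$, so your argument yields $d^\infty(\Sigma) \neq 0$ but no upper bound of $2$. The correct repair is not to derive the genus restriction from non-Hakenness, which cannot be done, but to read the corollary with the genus-$\geq 3$ hypothesis inherited from the sentence preceding it in the paper (``Theorem~\ref{2denselem} implies that for any Heegaard splitting \dots of genus 3 or greater, $d^\infty(\Sigma)$ is equal to either 0, 1 or 2'').
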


\section{Non-separating surfaces}
\label{constructionsect}

The following Lemma will not be used until Section~\ref{2densesect}, but the method of proof gives a good introduction to the proof of Lemma~\ref{infhomlem}.  Recall that an element $\alpha$ of a $\mathbf{Z}$ module $G$ is called \textit{primitive} if there is no $\beta \in G$ such that $\alpha = k \beta$ for some $k \neq \pm 1$.

\begin{Lem}
\label{dsjtsurfacelem}
Let $\ell_1,\dots,\ell_k$ be pairwise disjoint, essential loops in the boundary of a genus-$g$ handlebody $H$ with $g > k$.  Then there is a properly embedded, non-separating surface $F \subset H$ such that $\partial F$ is disjoint from each $\ell_j$ and the homology class defined by $\partial F$ in $H(\Sigma)$ is primitive.
\end{Lem}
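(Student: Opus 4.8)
The plan is to convert this geometric problem into a homological one, building $F$ as a regular fiber of a map $H\to S^1$ chosen so that the map is \emph{constant} on each $\ell_j$ (which forces genuine geometric disjointness) and primitive in cohomology (which forces the primitivity of the boundary class). First I would record the standard setup: the inclusion induces a surjection $\iota_*:H_1(\Sigma)\to H_1(H)\cong\mathbf{Z}^g$, and from the long exact sequence of the pair $(H,\partial H)$ — using $H_2(H)=0$ — the boundary map identifies $H_2(H,\partial H)$ with $L:=\ker\iota_*$. In particular, if $[\partial F]$ is primitive in $H_1(\Sigma)$ then the relative class $[F]\in H_2(H,\partial H)\cong L$ is primitive, hence nonzero mod $2$, so $F$ is automatically non-separating; the primitivity requirement thus subsumes the non-separation requirement.

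The main homological step is then a rank count. The images $\iota_*[\ell_1],\dots,\iota_*[\ell_k]$ generate a subgroup $W\subseteq H_1(H)$ of rank at most $k$. Since $g>k$, the quotient $H_1(H)/W$ has positive rank, so there is a primitive (i.e.\ surjective) homomorphism $\phi:H_1(H)\to\mathbf{Z}$ vanishing on $W$; in particular $\phi(\iota_*[\ell_j])=0$ for every $j$. Because $\iota_*$ is onto, the composite $\phi\circ\iota_*:H_1(\Sigma)\to\mathbf{Z}$ is again surjective, hence a primitive element of $H^1(\Sigma)$, whose Poincar\'e dual in $H_1(\Sigma)$ is a primitive class. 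This class will be $[\partial F]$.

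To realize $\phi$ geometrically I would represent it by a map $f:H\to S^1$. Since $\phi(\iota_*[\ell_j])=0$, the restriction $f|_{\ell_j}$ is a degree-zero self-map of $S^1$ and hence null-homotopic, so after a homotopy of $f$ I may assume $f$ is constant on each $\ell_j$, with all these constant values distinct from a chosen regular value $y$. Then $F:=f^{-1}(y)$ is a properly embedded surface, two-sided by the coorientation $df$, with $F\cap\ell_j=\varnothing$ for every $j$. Its relative class is Poincar\'e dual to $\phi$, so $[\partial F]$ is the primitive class constructed above, and $F$ is non-separating as noted. Finally, tubing the components of $F$ together along arcs in the interior of $H$ yields a connected surface without altering $\partial F$ or disturbing disjointness from the $\ell_j$.

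I expect the last step to be the main obstacle, since the whole force of the lemma lies in passing from the \emph{algebraic} vanishing $\phi(\iota_*[\ell_j])=0$ to honest \emph{geometric} disjointness: algebraic intersection number zero alone is not enough. The point of the fibration construction is exactly that, because a degree-zero self-map of $S^1$ is null-homotopic, one can (using the homotopy extension property on a collar of $\bigcup_j\ell_j$) push every $\ell_j$ entirely off a regular fiber. Verifying that this homotopy can be carried out without creating new intersections, and that the resulting fiber is genuinely embedded with the advertised boundary class, is where the care is required.
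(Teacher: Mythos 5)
Your proposal is correct, but it proves the lemma by a genuinely different route than the paper. The paper's argument is elementary and cut-and-paste: fix a disk system $D_1,\dots,D_g$ for $H$, form the $g\times k$ matrix of algebraic intersection numbers of the $\partial D_i$ with the $\ell_j$, observe that a disk slide realizes a row operation, and triangularize; since $g>k$ the last row becomes zero, so $\partial D_g$ has algebraic intersection zero with every $\ell_j$. The passage from algebraic to geometric disjointness, which you delegate to the null-homotopy of a degree-zero map $\ell_j\to S^1$ plus the homotopy extension property, is done in the paper by hand: adjacent intersections of $\partial D_g$ with $\ell_j$ of opposite sign are cancelled by attaching a band along the intervening arc of $\ell_j$, which lowers geometric intersection without changing the boundary's homology class, two-sidedness, or the non-separating property; primitivity follows because the class equals $[\partial D_g]$, the class of a connected non-separating simple closed curve. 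Your fibration argument ($\phi$ primitive on $H_1(H)$ vanishing on the span of the $\iota_*[\ell_j]$, realized as $f:H\to S^1$ made constant on each $\ell_j$, with $F$ a regular fiber) is sound: the rank count uses $g>k$ exactly as the paper does, surjectivity of $\phi\circ\iota_*$ gives primitivity of $[\partial F]$, and your observation that primitivity of $[F]\in H_2(H,\partial H)\cong\ker\iota_*$ forces non-separation is correct since that group is free, so a primitive class survives mod $2$. Two small cleanups: your closing worry about ``creating new intersections'' is vacuous, since the homotopy moves $f$ rather than the loops and $F\cap\ell_j=\varnothing$ is automatic once $f(\ell_j)\ne y$; what actually needs stating is that $y$ be a common regular value of $f$ and $f|_{\partial H}$ (so $F$ is properly embedded with $\partial F=F\cap\partial H\ne\varnothing$, nonemptiness coming from $[\partial F]$ being dual to the nonzero class $\phi\circ\iota_*$), chosen after smoothing $f$ while keeping it close to the constants $c_j$ near the loops; and the tubing step should match coorientations, which preserves the relative class and hence non-separation. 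What each approach buys: yours is softer and gets disjointness and primitivity in one stroke, and would generalize beyond handlebodies wherever $\iota_*$ is surjective; the paper's explicit band construction is deliberately chosen because the bands themselves carry extra content reused later --- in Lemma~\ref{infhomlem} the bands are undone one at a time by boundary-compression isotopies pushing them into $H_2$, an argument your regular-fiber surface does not directly support, and the paper's $F$ is moreover connected by construction, with no tubing needed.
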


\begin{proof}
Let $D_1,\dots,D_g$ be a system of disks for $H$, i.e. a collection of properly embedded, essential disks whose complement in $H$ is a single ball.  Orient the boundaries of the disks and the loops $\ell_1,\dots,\ell_k$, then form the matrix $A = (a_{ij})$ such that $a_{ij}$ is the algebraic intersection number of $D_i$ and $\ell_j$.

If we replace one of the disks in the system by a disk slide, the matrix for the new system of disks can be constructed from $A$ by adding or subtracting one row from the other.  Thus we can perform elementary row operations on $A$ by choosing new systems of disks for $H$.  In particular, we can make $A$ upper triangular.  

Because $A$ has more rows than columns, if $A$ is upper triangular then the bottom row consists of all zeros.  In other words, the disk $D_g$ has algebraic intersection 0 with each loop $\ell_j$.

If $D_g$ intersects the loop $\ell_1$, there must be a pair of adjacent intersections with opposite orientations.  By attaching a band from $\partial D_g$ to itself, along the arc of the loop $\ell_1$, we can form a new surface $F_1$ whose boundary (consisting of two loops) has algebraic intersection zero with each loop $\ell_j$, but whose geometric intersection number is strictly lower than that of $D_g$.  The surface $F_1$ is two sided because the intersections have opposite orientations and non-separating because $D_g$ is non-separating.

If $\partial F_1$ intersects $\ell_1$, we can form a new surface $F_2$ by attaching a band, and so on.  Continuing in this manner for each $\ell_j$, we form a surface $F$ which is properly embedded, two-sided, non-separating and such that $\partial F$ is disjoint from each $\ell_j$.

Attaching a band to the boundary of $F_i$ does not change the homology class of the boundary, so the homology class of $\partial F$ is equal to the class of $\partial D_g$.  Because $\partial D_g$ is represented by a connected loop, its homology class is primitive, as is the homology class of $\partial F$.
\end{proof}

We will now use the idea of attaching bands to eliminate intersections to prove the implication $(1) \Rightarrow (4)$ of Lemma~\ref{zerolem}.

\begin{Lem}
\label{infhomlem}
If the first homology of $M$ is infinite then $d^{0,\infty}(\Sigma) = 0$.
\end{Lem}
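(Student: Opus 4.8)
The plan is to show that $\mathbf{H}^0_1 \cap \mathbf{H}^\infty_2 \neq \emptyset$, since $d^{0,\infty}(\Sigma) = d(\mathbf{H}^0_1, \mathbf{H}^\infty_2) = 0$ means precisely that these two vertex sets share a point. So I want to produce a single simple closed curve $\gamma \subset \Sigma$ that is non-separating, bounds a disk in $H_1$, and bounds a two-sided incompressible surface in $H_2$. Setting $L_i = \ker\bigl(H_1(\Sigma) \to H_1(H_i)\bigr)$, the first requirement forces $[\gamma] \in L_1$, the third forces $[\gamma] \in L_2$, and non-separating forces $[\gamma] \neq 0$ in $H_1(\Sigma)$. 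Thus the homological target is a nonzero class in $L_1 \cap L_2$.

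First I would locate such a class. Each $L_i$ is a rank-$g$ saturated summand of $H_1(\Sigma) \cong \mathbf{Z}^{2g}$, spanned by the boundaries of a system of disks for $H_i$. A Mayer--Vietoris computation for $M = H_1 \cup_\Sigma H_2$ identifies the rank of the first homology of $M$ with the rank of $L_1 \cap L_2$, so the hypothesis that this homology is infinite yields a nonzero class in $L_1 \cap L_2$; dividing out its greatest common divisor and using that $L_1$ and $L_2$ are saturated gives a \emph{primitive} class $c \in L_1 \cap L_2$.

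Next I would realize $c$ geometrically by the technique of Lemma~\ref{dsjtsurfacelem}. Choose a disk system $D_1, \dots, D_g$ for $H_1$, so the classes $[\partial D_i]$ form a basis of $L_1$; writing $c$ in this basis as a primitive integer vector, I complete that vector to a $GL_g(\mathbf{Z})$ matrix and realize the corresponding change of basis by disk slides (the band attachments that effect elementary row operations on the disk system). After these slides one disk, say $D_1$, has $[\partial D_1] = c$, so $\gamma = \partial D_1$ bounds a disk in $H_1$ and, since $[\gamma] = c \neq 0$, is non-separating, giving $\gamma \in \mathbf{H}^0_1$. Because $c \in L_2$ as well, the curve $\gamma$ is null-homologous in $H_2$, equivalently has algebraic intersection zero with each meridian of a disk system for $H_2$; attaching bands to cancel these intersections in opposite-signed pairs, exactly as in Lemma~\ref{dsjtsurfacelem}, produces a properly embedded, two-sided surface $F \subset H_2$ with $\partial F = \gamma$. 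Compressing $F$ as far as possible keeps $\partial F = \gamma$ and makes it incompressible, so $\gamma \in \mathbf{H}^g_2 \subset \mathbf{H}^\infty_2$. Hence $\gamma \in \mathbf{H}^0_1 \cap \mathbf{H}^\infty_2$ and $d^{0,\infty}(\Sigma) = 0$.

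The main obstacle I expect is passing from the purely homological class $c \in L_1 \cap L_2$ to an honest \emph{embedded} curve that simultaneously realizes it as the boundary of a single disk in $H_1$ and as the boundary of a surface in $H_2$. The disk-slide realization is exactly where primitivity of $c$ is used, so that its coordinate vector completes to a basis and a single disk $D_1$ carries the class; the band construction in $H_2$ is the concrete mechanism, already rehearsed in Lemma~\ref{dsjtsurfacelem}, for converting the vanishing of the algebraic intersection numbers into an actual two-sided spanning surface. Some care is needed to verify two-sidedness of $F$ and to confirm that compression leaves the boundary $\gamma$ untouched, but these are routine once the spanning surface is in hand.
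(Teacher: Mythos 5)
Your front end is a sound, essentially equivalent repackaging of the paper's opening move: the paper forms the matrix $A$ of algebraic intersection numbers between the two disk systems, notes that infinite $H_1(M)$ forces $\det A = 0$, and row-reduces by disk slides until one disk $D_g$ of the $H_1$-system has algebraic intersection zero with every $D'_j$; your Mayer--Vietoris computation (rank of $H_1(M)$ equals rank of $L_1 \cap L_2$), saturation argument, and basis-extension realized by disk slides land in exactly the same place, namely a single disk in $H_1$ whose boundary $\gamma$ is null-homologous in $H_2$. Your primitivity detour is valid but unnecessary here: since the paper only needs \emph{some} disk in a system with a zero row of intersection numbers, integer row reduction alone suffices, and primitivity plays its real role in Lemma~\ref{dsjtsurfacelem} and Theorem~\ref{2denselem}, not in this lemma.

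The genuine gap is the sentence claiming that ``attaching bands \ldots exactly as in Lemma~\ref{dsjtsurfacelem}, produces a properly embedded, two-sided surface $F \subset H_2$ with $\partial F = \gamma$.'' The band construction of Lemma~\ref{dsjtsurfacelem} attaches bands to an \emph{existing} surface, and the only surface available is the disk $D_1 \subset H_1$; banding it along arcs of the curves $\partial D'_j$ yields a surface that still lies in $H_1$, and whose boundary is no longer $\gamma$ but a multi-component curve disjoint from the $\partial D'_j$. There is no direct ``band construction in $H_2$'' to invoke, since in $H_2$ you have only the curve $\gamma$, not a surface to band. The paper closes precisely this gap with two further steps: each boundary component of the banded surface, being disjoint from a complete disk system for $H_2$, bounds a disk in $H_2$, and capping off yields a closed non-separating surface in $M$; then each band is observed to be a boundary-compression, and pushing the bands (and caps) across $\Sigma$ into $H_2$ one at a time produces an isotopic surface meeting $H_1$ in exactly the disk $D_g$ and meeting $H_2$ in a two-sided, non-separating surface with boundary $\partial D_g = \gamma$. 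Without this cap-and-push argument your $F \subset H_2$ does not exist as constructed (the underlying fact --- a null-homologous curve in $\partial N$ bounds an embedded orientable surface in $N$ --- is true, but it is exactly what must be proved, not assumed). Your final remark, compressing $F$ while fixing $\partial F = \gamma$ to achieve incompressibility, is correct and in fact supplies a detail the paper's definition of $\mathbf{H}^\infty$ technically requires but its proof glosses over.
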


\begin{proof}
Let $D_1,\dots,D_g$ be a system of disks for $H_1$ and $D'_1,\dots,D'_g$ be a system of disk for $H_2$.  Orient the boundaries of both systems of disks.  Let $A$ be the matrix of algebraic intersection numbers of the boundaries.  Because the first homology is infinite, the determinant of $A$ must equal zero.  

As in the last proof, we can perform row operations on $A$ by taking disk slides of the disks $D_1,\dots,D_g$.  Because the determinant of $A$ is zero, some sequence of disk slides will leave $A$ with all zeros in the bottom row.  Thus after a sequence of disk slides, we can assume $D_g$ has algebraic intersection 0 with each $D'_j$.

By attaching bands to the boundary of $D_g$ as in the proof of Lemma~\ref{dsjtsurfacelem}, we can form a properly embedded, two sided, non-separating surface $F$ whose boundary is disjoint from $D'_1,\dots,D'_g$.  Thus each boundary component of $F$ bounds a disk in $H_2$.  The union of $F$ and these disks is a properly embedded, two sided, non-separating closed surface in $M$.

Recall that $F$ was constructed from $D_g$ by attaching bands to its boundary.  The last band defines a boundary compression for $F$ corresponding to an isotopy pushing this last band into $H_2$.  After this isotopy, the second to last band defines a second isotopy, and so on.  The final result is a surface isotopic to $F$ which intersects $H_1$ in a disk isotopic to $D_g$.  

The intersection of this surface with $H_2$ is orientable, two-sided and non-separating because $F$ has these properties.  Thus $\partial D_g$ is in both  $\mathbf{H}^0_1$ and $\mathbf{H}^\infty_2$ so $d^{0,\infty}(\Sigma) = d(\mathbf{H}_1^0, \mathbf{H}_2^\infty) = 0$.
\end{proof}

\begin{proof}[Proof of Lemma~\ref{zerolem}]
Lemma~\ref{infhomlem} implies that for any Heegaard splitting $(\Sigma, H_1, H_2)$, $d(\mathbf{H}_1^0, \mathbf{H}_2^\infty) = 0$ so $(1) \Rightarrow (4)$.  Because $\mathbf{H}_1^0$ is contained in $\mathbf{H}_1^\infty$, $(4) \Rightarrow (3)$ is immediate.

Let $(\Sigma, H_1, H_2)$ be a Heegaard splitting for $M$.  If $d^\infty(\Sigma) = 0$ then there is a simple closed curve $\ell \subset \Sigma$ such that $\ell$ bounds two-sided, non-separating properly embedded surfaces $F \subset H_1$ and $F' \subset H_2$.  The union $F \cup F'$ is a two-sided, non-separating closed surface embedded in $M$.  Compressing $F \cup F'$ to either side produces at least one new two-sided, non-separating surface.  By compressing repeatedly, we eventually find a closed, non-separating, two-sided incompressible surface in $M$.  Thus $(3) \Rightarrow (2)$.

The final step, $(2) \Rightarrow (1)$, is a classical result.  If $M$ contains a two-sided, non-separating, closed surface $S \subset M$, let $p$ be a point in $S$.  There is a path $\alpha : [0,1] \rightarrow M$ from $p$ to itself that does not cross $S$.  The homology class of $\alpha$ has infinite order so the first homology of $M$ is infinite. 
\end{proof}

\section{Density}
\label{2densesect}

\begin{proof}[Proof of Theorem~\ref{2denselem}]
We will prove the following:  Let $\ell$ be a loop in $\partial H$ and assume the genus of $H$ is at least 3.  Then there is a non-separating loop $\ell'$ disjoint from $\ell$ and a properly embedded, two-sided, non-separating surface $F$ such that $\partial F$ is a single, non-separating loop disjoint from $\ell'$.

By Lemma~\ref{dsjtsurfacelem}, there is a properly embedded surface $F'' \subset H$ such that $\partial F''$ is disjoint from $\ell$ and defines a primitive element of the homology.  Of all the properly embedded surfaces with boundary homologous to $\partial F''$, let $F'$ be one with minimal number of boundary components.  Each component of $\partial F'$ has an orientation induced by $F'$ and thus defines an element of the first homology of $\Sigma$.

For each component $C$ of $\Sigma \setminus (\ell \cup \partial F')$, an orientation for a loop in $\partial C$ induces an orientation of $C$.  Assume two components of $\partial C$ come from loops of $\partial F'$ and induce the same orientation of $C$.  Because the induced orientations agree, adding a band between them produces a new orientable surface with fewer boundary components, but homologous boundary.  Thus the minimality assumption implies that each component $C$ of $\Sigma \setminus (\ell \cup \partial F')$ has at most one boundary loop coming from $\partial F'$ inducing each possible orientation.  Thus it has at most two boundary loops coming from $\partial F'$ and these induce opposite orientations on $C$.

Assume for contradiction each component of $\Sigma \setminus (\ell \cup \partial F')$ is planar.  Each component that is disjoint from $\ell$ has exactly two boundary components.  A planar surface with two boundary loops is an annulus so each component disjoint from $\ell$ must be an annulus.  There are either two components of $\Sigma \setminus (\ell \cup \partial F')$ with one boundary loop each on $\ell$, or one component with two boundary loops on $\ell$.  In the first case, the two components are pairs of pants, while in the second, the component is a four punctured sphere.  In either case, the union of such components and a collection of annuli is a genus-two surface.  This contradicts the assumption that $\Sigma$ has genus at least three, so we conclude that some component must be non-planar.

Let $C$ be a non-planar component.  There is a simple closed curve $\ell' \subset C$ such that $\ell'$ separates a once-punctured torus from $C$.  In $\Sigma$, the loop $\ell'$ separates a once-punctured torus that contains no components of $\partial F'$.  Let $F$ be a surface whose boundary is homologous to $F'$, disjoint from the once-punctured torus bounded by $\ell'$ and such that the number of boundary components of $F'$ is minimal over all such surfaces.

Once again, each component of $\Sigma \setminus (\ell' \cup \partial F)$ has at most two boundary components on loops in $\partial F$, with opposite induced orientations.  Because $\ell'$ bounds a surface disjoint from $\partial F$, each component of $\Sigma \setminus \partial F$ must also have at most two boundary loops on $\partial F$.  

If a component $C$ of $\Sigma \setminus \partial F$ has a single boundary component, this loop is homology trivial in $\Sigma$.  Attaching a boundary parallel surface to $F$ removes this loop so minimality of $\partial F$ implies that that each component has two boundary loops.

If $C$ has two boundary loops (with opposite induced orientations) then these loops determine the same element of the homology of $\Sigma$.  Because $\Sigma$ is connected, this implies that any two loops of $\partial F$ (with their induced orientations) determine the same element of the homology.  Thus the element of the homology determined by $\partial F$ is of the form $k \beta$ where $k$ is the number of boundary components of $F$.

By Lemma~\ref{dsjtsurfacelem}, $\partial F$ determines a primitive element of the homology of $\Sigma$, so $k$ must be 1.  In other words, the boundary of $F$ is connected and $\partial F$ determines an element of $\mathbf{H}^\infty$.  By construction, $\partial F$ is disjoint from a loop $\ell'$ that is disjoint from $\ell$.  Thus the vertex $v \in C(\Sigma)$ determined by $\ell$ is distance at most 2 from $\mathbf{H}^\infty$.
\end{proof}

\bibliographystyle{amsplain}
\bibliography{bndry}

\end{document}